\newtheorem{theorem}{Theorem}[section]
\newtheorem{lemma}{Lemma}[section]
\newtheorem{definition}{Definition}[section]
\numberwithin{equation}{section}
\numberwithin{table}{section}
\numberwithin{figure}{section}
\title{$k$-Fibonacci numbers that are palindromic concatenations of two distinct Repdigits}
\author{Herbert Batte$^{1,*} $ and Florian Luca$^{1}$}
\date{}
\begin{document}
	\maketitle
	\abstract{ Let $k\ge 2$ and $\{F_n^{(k)}\}_{n\geq 2-k}$ be the sequence of $k$--generalized Fibonacci numbers whose first $k$ terms are $0,\ldots,0,0,1$ and each term afterwards is the sum of the preceding $k$ terms. In this paper, we determine all $k$-Fibonacci numbers that are palindromic concatenations of two distinct repdigits.} 
	
	{\bf Keywords and phrases}: $k$-generalized Fibonacci numbers; linear forms in logarithms; LLL-algorithm.
	
	{\bf 2020 Mathematics Subject Classification}: 11B39, 11D61, 11D45.
	
	\thanks{$ ^{*} $ Corresponding author}
	
\section{Introduction}\label{intro}
\subsection{Background}\label{sec:1.1}
Given an integer $k \geq 2$, the $k$-generalized Fibonacci sequence $\{F_n^{(k)}\}_{n\in\mathbb{Z}}$, is defined by the recurrence relation  
	\begin{equation*}
		F_n^{(k)} = F_{n-1}^{(k)} + F_{n-2}^{(k)} + \cdots + F_{n-k}^{(k)}, \quad \text{for all} \ n \ge 2,
	\end{equation*}
with initial values \( F_{2-k}^{(k)} = \cdots = F_{-1}^{(k)} = F_{0}^{(k)} = 0 \) and $F_1^{(k)} = 1$. This sequence generalizes the well-known Fibonacci sequence, which corresponds to the case when $k=2$. It has been investigated in numerous algebraic settings.
	
A \textit{repdigit} in base 10 is a positive integer consisting of repeated occurrences of the same digit. More formally, a number $N$ of this form can be expressed as
	\[
	N = \underbrace{\overline{d \ldots d}}_{\ell \text{ times}} = d \left( \frac{10^\ell - 1}{9} \right),
	\]
where $d$ and $\ell$ are non negative integers satisfying $0 \leq d \leq 9$ and $\ell \geq 1$. The study of Diophantine properties of recurrence sequences, particularly in relation to repdigits, has been an active area of research. In particular, previous studies have explored cases where terms in such sequences can be written as sums or concatenations of specific types of numbers. 
	
The work by Banks and Luca, \cite{banks} provided some initial insights into the number of such sequence terms, although the results were limited in scope. Additionally, the case of Fibonacci numbers formed by concatenations of two repdigits was investigated in \cite{ala}, where it was established that the largest such Fibonacci number is \( F_{14} = 377 \). This work was later extended to the sequence of $k$-generalized Fibonacci numbers in \cite{ala2}.
	
More recently, researchers have explored the connection between linear recurrence sequences and palindromic repdigits. A number is said to be a \textit{palindrome} if it reads the same backwards as forward, such as 121, 40404, and 77. The occurrence of palindromes in linear recurrence sequences has been studied extensively. For instance, Chalebgwa and Ddamulira, in \cite{chal}, showed that 151 and 616 are the only Padovan numbers that can be written as palindromic concatenations of two distinct repdigits. Similarly, it was shown in \cite{emong} that the only palindromic concatenation of two distinct repdigits appearing in the Narayana's cows sequence is 595. Further results on repdigits and palindromes in recurrence sequences can be found in \cite{batte} and \cite{kaggwa}.
	
Motivated by the work in \cite{ala2}, we instead investigate a more constrained Diophantine equation involving $k$-generalized Fibonacci numbers:
	\begin{align}\label{eq:main}
	F_n^{(k)} = \overline{\underbrace{d_1 \ldots d_1}_{\ell \text{ times}}\underbrace{d_2 \ldots d_2}_{m \text{ times}}\underbrace{d_1 \ldots d_1}_{\ell \text{ times}}},
	\end{align}
	where $ d_1, d_2 \in \{0,1,2,\dots,9\} $ with $ d_1 > 0 $, $ d_1 \neq d_2 $ and $\ell$, $m\ge 1$ because we are studying $k$-Fibonacci numbers that are palindromic concatenations of two \textit{distinct} repdigits. Also note that for $F_n^{(k)}$ to be a palindromic number of two distinct repdigits, it must have at least three digits in its decimal representation, implying that $n \ge 9$. So, we also assume $n\ge 9$ throughout the paper and prove the following main result of our study.
\subsection{Main Results}\label{sec:1.2l}
\begin{theorem}\label{thm1.1l} 
	$F_{11}^{(5)}=464$ is the only $k$-Fibonacci number that is a palindromic concatenation of two distinct repdigits.
\end{theorem}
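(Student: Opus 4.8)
The plan is to convert the combinatorial constraint \eqref{eq:main} into a polynomial--exponential Diophantine equation and then attack it with the standard machinery of linear forms in logarithms followed by a lattice--reduction step. First I would record the closed form of the right-hand side: writing $L=2\ell+m$ for the total number of digits and multiplying \eqref{eq:main} by $9$ gives
\[
9F_n^{(k)} = d_1\cdot 10^{L} + (d_2-d_1)10^{\ell+m} + (d_1-d_2)10^{\ell} - d_1,
\]
so that the dominant term is $d_1\cdot 10^{L}$ and the remaining terms are bounded by $18\cdot 10^{\ell+m}$. Using a Binet-type estimate $F_n^{(k)}=c_k\alpha^{n-1}+O(1)$, where $\alpha=\alpha(k)\in(2(1-2^{-k}),2)$ is the dominant root of $x^k-x^{k-1}-\cdots-1$, together with $\alpha^{n-2}\le F_n^{(k)}<\alpha^{n-1}$ and $10^{L-1}\le F_n^{(k)}<10^{L}$, I would first fix the rough linear relation between $n$, $k$ and $L$, in particular $L\le n$.

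Next I would peel off the dominant term to produce a small linear form in logarithms. Dividing the displayed identity by $d_1\cdot 10^{L}$ and using $F_n^{(k)}\approx c_k\alpha^{n-1}$ yields
\[
\left|\,(n-1)\log\alpha - L\log 10 + \log\!\frac{9c_k}{d_1}\,\right| < \frac{C}{10^{\ell}}
\]
for an absolute constant $C$. Applying Matveev's lower bound for a linear form in three logarithms gives $\ell\log 10 \ll \log n$, with an implied constant depending on $k$ through the height of $\alpha$ and the degree. Repeating this step after subtracting the next term $(d_2-d_1)10^{\ell+m}$ produces a second linear form that bounds $m$, and hence bounds $L$ and ultimately $n$, again in terms of $k$.

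The main obstacle is precisely this dependence on $k$: since $\alpha\to 2$ and $h(\alpha)\approx(\log 2)/k$, a naive application of Matveev yields a bound on $n$ that grows with $k$ and does not by itself close the argument. To break this I would follow the Bravo--Luca dichotomy. For $n\le k+1$ one has the exact identity $F_n^{(k)}=2^{n-2}$, reducing \eqref{eq:main} to a question about powers of $2$ that is handled by a linear form in the two logarithms $\log 2$ and $\log 10$ with no $k$-dependence. For $n\ge k+2$ I would use the sharper approximation $F_n^{(k)}=2^{n-2}(1+\zeta)$ with $|\zeta|$ extremely small whenever $n$ lies below a threshold of order $2^{k/2}$; in that regime the linear form again involves only $\log 2$ and $\log 10$, whereas if $n$ exceeds the threshold then $k\ll\log n$, so $k$ becomes bounded in terms of $n$ and the earlier bound closes the loop. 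Either way one arrives at an absolute, effectively computable upper bound for $n$ and $k$.

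Finally, the bound coming out of Matveev will be astronomically large, so I would apply the LLL-algorithm in Baker--Davenport style to each of the linear forms to collapse it to a small range, and then finish with a direct computer search over the finitely many remaining tuples $(n,k,\ell,m,d_1,d_2)$, expecting to recover only $F_{11}^{(5)}=\overline{4\,6\,4}=464$.
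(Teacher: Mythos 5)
Your proposal follows essentially the same route as the paper: the same rewriting $9F_n^{(k)}=d_1\cdot 10^{2\ell+m}+(d_2-d_1)10^{\ell+m}+(d_1-d_2)10^{\ell}-d_1$, the same two linear forms in three logarithms (peeling off $d_1\cdot 10^{2\ell+m}$ and then the next term) treated by Matveev to bound $\ell$, $m$, $n$ in terms of $k$, the same Bravo--Luca dichotomy using the sharp estimate $|F_n^{(k)}-2^{n-2}|<2/2^{k/2}$ to remove the $k$-dependence for large $k$, and the same LLL reduction plus final computer search. The only cosmetic differences are that the paper disposes of the case $n\le k+1$ by elementary $2$-adic divisibility arguments rather than a linear form in $\log 2$ and $\log 10$, and phrases the dichotomy as $k\le 900$ versus $k>900$ rather than $n<2^{k/2}$ versus $n\ge 2^{k/2}$; these are equivalent in substance.
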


\section{Some properties of $k$-generalized Fibonacci numbers}
The following is a well known identity:
\begin{align}\label{eq:2.1}
	F_n^{(k)} =  2^{n-2},\qquad \text{for all}\qquad 2 \le n \le k+1,
\end{align}
which can be easily proved by induction. Moreover, the first $k$-generalized Fibonacci number that is not a power of two is $F_{k+2}^{(k)} =  2^{k}-1$. The characteristic polynomial of $(F_n^{(k)})_{n\in {\mathbb Z}}$ is given by
\[
\Psi_k(x) = x^k - x^{k-1} - \cdots - x - 1.
\]
This polynomial is irreducible in $\mathbb{Q}[x]$ and it possesses a unique real root $\alpha:=\alpha(k)>1$. All the other roots of $\Psi_k(x)$ are inside the unit circle,  see \cite{bravo}. The particular root $\alpha$ can be found in the interval
\begin{align}\label{eq2.3m}
	2(1 - 2^{-k} ) < \alpha < 2,\qquad \text{for} \qquad k\ge 2,
\end{align}
as noted in \cite{bravo}. As in the classical case when $k=2$, it was shown in \cite{Brl} that 
\begin{align}\label{fn_up}
	\alpha^{n-2} \le F_n^{(k)}\le \alpha^{n-1}, \qquad \text{holds for all}\qquad n\ge1, \quad k\ge 2.
\end{align}

Lastly, for $k\ge 2$ we define
\begin{align*}
	f_k(x):=\dfrac{x-1}{2+(k+1)(x-2)}.
\end{align*}
If $2(1 - 2^{-k} ) <  x < 2$, then $\frac{d}{dx}f_k(x)<0$ and so inequality \eqref{eq2.3m} implies that
\begin{align*}
	\dfrac{1}{2}=f_k(2)<f_k(\alpha)<f_k(2(1 - 2^{-k} ))\le \dfrac{3}{4},
\end{align*}
holds for all $k\ge 3$. The inequality $1/2=f_k(2)<f_k(\alpha)<3/4$ also holds for $k=2$.

In addition, it is easy to verify that $|f_k(\alpha_i)|<1$ holds for all $2\le i\le k$, where $\alpha_i$ for $i=2,\ldots,k$ are the roots of $\Psi_k(x)$ inside the unit disk.
Lastly, it was shown in \cite{dres} that
\begin{align}\label{eq2.6m}
	F_n^{(k)}=\displaystyle\sum_{i=1}^{k}f_k(\alpha_i)\alpha_i^{n-1}~~\text{and}~~\left|F_n^{(k)}-f_k(\alpha)\alpha^{n-1}\right|<\dfrac{1}{2},
\end{align}
hold for all $k\ge 2$ and $n\ge 2-k$. The first expression in \eqref{eq2.6m} is known as the Binet-like formula for $F_n^{(k)}$. Furthermore, the second inequality expression in \eqref{eq2.6m} shows that the contribution of the zeros that are inside the unit circle to $F_n^{(k)}$ is small. However, there is a sharper estimate than \eqref{eq2.6m} and it appears in as Lemma 3 in \cite{BraLuca}. It states that if $n<2^{k/2}$, then 
\begin{align}\label{sharp}
	\left|F_n^{(k)}-2^{n-2}\right|<\dfrac{2}{2^{k/2}}.
\end{align}
\section{Methods}
\subsection{Linear forms in logarithms}
We use four times Baker-type lower bounds for nonzero linear forms in three logarithms of algebraic numbers. There are many such bounds mentioned in the literature like that of Baker and W{\"u}stholz from \cite{BW} or Matveev from \cite{matl}. Before we can formulate such inequalities we need the notion of height of an algebraic number recalled below.

\begin{definition}\label{def2.1l}
	Let $ \gamma $ be an algebraic number of degree $ d $ with minimal primitive polynomial over the integers $$ a_{0}x^{d}+a_{1}x^{d-1}+\cdots+a_{d}=a_{0}\prod_{i=1}^{d}(x-\gamma^{(i)}), $$ where the leading coefficient $ a_{0} $ is positive. Then, the logarithmic height of $ \gamma$ is given by $$ h(\gamma):= \dfrac{1}{d}\Big(\log a_{0}+\sum_{i=1}^{d}\log \max\{|\gamma^{(i)}|,1\} \Big). $$
\end{definition}
 In particular, if $ \gamma$ is a rational number represented as $\gamma:=p/q$ with coprime integers $p$ and $ q\ge 1$, then $ h(\gamma ) = \log \max\{|p|, q\} $. 
The following properties of the logarithmic height function $ h(\cdot) $ will be used in the rest of the paper without further reference:
\begin{equation}\nonumber
	\begin{aligned}
		h(\gamma_{1}\pm\gamma_{2}) &\leq h(\gamma_{1})+h(\gamma_{2})+\log 2;\\
		h(\gamma_{1}\gamma_{2}^{\pm 1} ) &\leq h(\gamma_{1})+h(\gamma_{2});\\
		h(\gamma^{s}) &= |s|h(\gamma)  \quad {\text{\rm valid for}}\quad s\in \mathbb{Z}.
	\end{aligned}
\end{equation}
With these properties, it was easily computed in Section 3 of \cite{Brl} that
\begin{align}\label{eqh}
	h\left(f_k(\alpha)\right)<3\log k, \qquad \text{for all}\qquad k\ge 2.
\end{align}

A linear form in logarithms is an expression
\begin{equation*}
	\Lambda:=b_1\log \gamma_1+\cdots+b_t\log \gamma_t,
\end{equation*}
where for us $\gamma_1,\ldots,\gamma_t$ are positive real  algebraic numbers and $b_1,\ldots,b_t$ are nonzero integers. We assume, $\Lambda\ne 0$. We need lower bounds 
for $|\Lambda|$. We write ${\mathbb K}:={\mathbb Q}(\gamma_1,\ldots,\gamma_t)$ and $D$ for the degree of ${\mathbb K}$.
We start with the general form due to Matveev, see Theorem 9.4 in \cite{matl}. 

\begin{theorem}[Matveev, see Theorem 9.4 in \cite{matl}]
	\label{thm:Matl} 
	Put $\Gamma:=\gamma_1^{b_1}\cdots \gamma_t^{b_t}-1=e^{\Lambda}-1$. Assume $\Gamma\ne 0$. Then 
	$$
	\log |\Gamma|>-1.4\cdot 30^{t+3}\cdot t^{4.5} \cdot D^2 (1+\log D)(1+\log B)A_1\cdots A_t,
	$$
	where $B\ge \max\{|b_1|,\ldots,|b_t|\}$ and $A_i\ge \max\{Dh(\gamma_i),|\log \gamma_i|,0.16\}$ for $i=1,\ldots,t$.
\end{theorem}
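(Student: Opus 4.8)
The plan is to prove this explicit lower bound by the Gel'fond--Baker method, in the refined form that produces Matveev's near-optimal dependence on the number $t$ of logarithms. Write $\Lambda = b_1\log\gamma_1 + \cdots + b_t\log\gamma_t$, so that $\Gamma = e^{\Lambda}-1$, and observe that when $\Lambda$ is small one has $\log|\Gamma| \asymp \log|\Lambda|$; hence it suffices to bound $|\Lambda|$ from below. If $|\Lambda|$ already exceeds the target, we are done, so I would assume $|\Lambda| < e^{-\Omega}$, where $\Omega = 1.4\cdot 30^{t+3} t^{4.5} D^2(1+\log D)(1+\log B)A_1\cdots A_t$, and aim for a contradiction. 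The argument then rests on three classical pillars: an auxiliary construction, an analytic extrapolation powered by the assumed smallness of $\Lambda$, and an arithmetic Liouville-type obstruction.

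\emph{(Auxiliary function.)} First I would introduce parameters $L_0, L_1,\ldots,L_t$ and $S$, to be optimized at the end, and form
$$\Phi(z) = \sum_{\lambda_0=0}^{L_0}\cdots\sum_{\lambda_t=0}^{L_t} p(\lambda_0,\ldots,\lambda_t)\, z^{\lambda_0}\,\gamma_1^{\lambda_1 z}\cdots\gamma_t^{\lambda_t z}.$$
Using Siegel's lemma (the Thue--Siegel pigeonhole bound) over the number field $\mathbb{K}=\mathbb{Q}(\gamma_1,\ldots,\gamma_t)$, I would choose integer coefficients $p(\lambda)$, not all zero and of controlled logarithmic height, so that $\Phi$ and its first several derivatives vanish at the integer points $z=0,1,\ldots,S$. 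Solvability requires the number of unknowns $\prod_i (L_i+1)$ to exceed the number of vanishing conditions, which fixes the first relation among the parameters.

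\emph{(Extrapolation.)} The heart of the matter is that $\gamma_1^{\lambda_1 z}\cdots\gamma_t^{\lambda_t z} = \exp\!\big(z(\lambda_1\log\gamma_1 + \cdots + \lambda_t\log\gamma_t)\big)$, and because $\Lambda$ is assumed minuscule the logarithms are, to extremely high precision, proportional along the direction $(b_1,\ldots,b_t)$. This lets me eliminate one logarithm in favour of the others up to an error governed by $\Lambda$, collapsing the effective number of directions. I would then run the standard extrapolation: having many zeros, $\Phi$ is small on a large disk by the Schwarz lemma and the maximum modulus principle, so $|\Phi^{(j)}(n)|$ is minuscule at integers $n$ far beyond $S$. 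But each such value lies in $\mathbb{K}$ with explicitly bounded denominator and house, so a value that small must vanish exactly. Iterating, $\Phi$ and its derivatives are forced to vanish on a range much larger than Siegel's lemma alone could arrange.

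\emph{(Zero estimate and conclusion.)} Finally, a function of the form $\sum p(\lambda)\,z^{\lambda_0}\prod_i \gamma_i^{\lambda_i z}$ cannot vanish to such high total order at so many points unless $p\equiv 0$; this is the zero estimate, which for the sharp constants reduces to the nonvanishing of a confluent Vandermonde-type determinant. That contradicts the nontriviality furnished by Siegel's lemma, and the only escape is that the assumption $|\Lambda| < e^{-\Omega}$ fails, which is exactly the asserted inequality. The main obstacle — and what separates Matveev's bound from a soft Baker-type estimate — is the bookkeeping and parameter optimization that yield the explicit, nearly best-possible dependence $30^{t+3}t^{4.5}$ on $t$, while avoiding any multiplicative-independence hypothesis on the $\gamma_i$. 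Matveev secures the good $t$-dependence by a Kummer-type descent, passing to a suitable radical/cyclotomic extension and arguing inductively on $t$ so that each added logarithm costs only a bounded factor rather than a super-exponential loss. I would therefore expect essentially all the difficulty to reside in (i) this inductive descent and (ii) the simultaneous tuning of $L_0,\ldots,L_t,S$ and the vanishing orders so that the analytic upper bound and the arithmetic lower bound on the extrapolated values meet precisely at the stated constant; the Schwarz-lemma estimates and the Siegel's-lemma step are comparatively routine once the parameters are fixed.
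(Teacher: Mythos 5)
This theorem is not proved in the paper at all: it is imported as a black box (Matveev's theorem, quoted via Theorem 9.4 of \cite{matl}), so there is no internal proof to compare against; the relevant comparison is with Matveev's actual argument in the literature. Your outline does identify the correct architecture of that argument: the reduction from $|\Gamma|$ to $|\Lambda|$ (one may assume $|\Gamma|<1/2$, since otherwise the bound is trivial, and then $|\Lambda|\le 2|\Gamma|$), the auxiliary function built by Siegel's lemma over $\mathbb{K}$, the extrapolation that exploits the assumed smallness of $\Lambda$ to collapse one logarithmic direction, the Liouville-type ``small algebraic number must vanish'' step, the zero estimate, and the Kummer-type descent that is indeed the source of the polynomial-times-exponential dependence $30^{t+3}t^{4.5}$ in $t$ rather than a super-exponential loss.

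However, as a proof the proposal has a decisive gap, which you yourself flag: everything that makes this particular statement a theorem --- the explicit constant $1.4\cdot 30^{t+3}t^{4.5}D^{2}(1+\log D)$ and the precise normalizations $A_i\ge\max\{Dh(\gamma_i),|\log\gamma_i|,0.16\}$, $B\ge\max\{|b_1|,\ldots,|b_t|\}$ --- resides exactly in the parts you defer: the simultaneous tuning of $L_0,\ldots,L_t,S$ and the vanishing orders, the zero estimate with sharp constants, the handling of multiplicatively dependent $\gamma_i$ (the theorem assumes no independence hypothesis, and degenerate cases must be dealt with separately), and the inductive descent itself. Your sketch would, with standard bookkeeping, yield a qualitative bound of the shape $\log|\Gamma|>-C(t,D)(1+\log B)A_1\cdots A_t$ for some unspecified $C(t,D)$, but nothing in what is written pins down, or could be checked against, the stated numerical constant; the claim that the remaining work is ``comparatively routine'' understates that this is where essentially all of Matveev's (long and delicate) paper lives. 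Since the paper uses the theorem purely as a cited tool, the appropriate treatment is the one the paper adopts: cite the source rather than attempt a derivation, and if a proof were demanded, your outline would serve only as a roadmap, not as a proof.
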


\subsection{Reduction methods}
Typically, the estimates from Matveev's theorem are excessively large to be practical in computations. To refine these estimates, we use a method based on the LLL--algorithm. We next explain this method.

Let $k$ be a positive integer. A subset $\mathcal{L}$ of the $k$-dimensional real vector space ${ \mathbb{R}^k}$ is called a lattice if there exists a basis $\{b_1, b_2, \ldots, b_k \}$ of $\mathbb{R}^k$ such that
\begin{align*}
	\mathcal{L} = \sum_{i=1}^{k} \mathbb{Z} b_i = \left\{ \sum_{i=1}^{k} r_i b_i \mid r_i \in \mathbb{Z} \right\}.
\end{align*}
We say that $b_1, b_2, \ldots, b_k$ form a basis for $\mathcal{L}$, or that they span $\mathcal{L}$. We
call $k$ the rank of $ \mathcal{L}$. The determinant $\text{det}(\mathcal{L})$, of $\mathcal{L}$ is defined by
\begin{align*}
	\text{det}(\mathcal{L}) = | \det(b_1, b_2, \ldots, b_k) |,
\end{align*}
with the $b_i$'s being written as column vectors. This is a positive real number that does not depend on the choice of the basis (see \cite{Cas}, Section 1.2).

Given linearly independent vectors $b_1, b_2, \ldots, b_k $ in $ \mathbb{R}^k$, we refer back to the Gram--Schmidt orthogonalization technique. This method allows us to inductively define vectors $b^*_i$ (with $1 \leq i \leq k$) and real coefficients $\mu_{i,j}$ (for $1 \leq j \leq i \leq k$). Specifically,
\begin{align*}
	b^*_i &= b_i - \sum_{j=1}^{i-1} \mu_{i,j} b^*_j,~~~
	\mu_{i,j} = \dfrac{\langle b_i, b^*_j\rangle }{\langle b^*_j, b^*_j\rangle},
\end{align*}
where \( \langle \cdot , \cdot \rangle \)  denotes the ordinary inner product on \( \mathbb{R}^k \). Notice that \( b^*_i \) is the orthogonal projection of \( b_i \) on the orthogonal complement of the span of \( b_1, \ldots, b_{i-1} \), and that \( \mathbb{R}b_i \) is orthogonal to the span of \( b^*_1, \ldots, b^*_{i-1} \) for \( 1 \leq i \leq k \). It follows that \( b^*_1, b^*_2, \ldots, b^*_k \) is an orthogonal basis of \( \mathbb{R}^k \). 
\begin{definition}
	The basis $b_1, b_2, \ldots, b_n$ for the lattice $\mathcal{L}$ is called reduced if
	\begin{align*}
		\| \mu_{i,j} \| &\leq \frac{1}{2}, \quad \text{for} \quad 1 \leq j < i \leq n,~~
		\text{and}\\
		\|b^*_{i}+\mu_{i,i-1} b^*_{i-1}\|^2 &\geq \frac{3}{4}\|b^*_{i-1}\|^2, \quad \text{for} \quad 1 < i \leq n,
	\end{align*}
	where $ \| \cdot \| $ denotes the ordinary Euclidean length. The constant $ {3}/{4}$ above is arbitrarily chosen, and may be replaced by any fixed real number $ y $ in the interval ${1}/{4} < y < 1$ {\rm(see \cite{LLL}, Section 1)}.
\end{definition}
Let $\mathcal{L}\subseteq\mathbb{R}^k$ be a $k-$dimensional lattice  with reduced basis $b_1,\ldots,b_k$ and denote by $B$ the matrix with columns $b_1,\ldots,b_k$. 
We define
\[
l\left( \mathcal{L},y\right)= \left\{ \begin{array}{c}
	\min_{x\in \mathcal{L}}||x-y|| \quad  ;~~ y\not\in \mathcal{L}\\
	\min_{0\ne x\in \mathcal{L}}||x|| \quad  ;~~ y\in \mathcal{L}
\end{array}
\right.,
\]
where $||\cdot||$ denotes the Euclidean norm on $\mathbb{R}^k$. It is well known that, by applying the
LLL--algorithm, it is possible to give in polynomial time a lower bound for $l\left( \mathcal{L},y\right)$, namely a positive constant $c_1$ such that $l\left(\mathcal{L},y\right)\ge c_1$ holds (see \cite{SMA}, Section V.4).
\begin{lemma}\label{lem2.5m}
	Let $y\in\mathbb{R}^k$ and $z=B^{-1}y$ with $z=(z_1,\ldots,z_k)^T$. Furthermore, 
	\begin{enumerate}[(i)]
		\item if $y\not \in \mathcal{L}$, let $i_0$ be the largest index such that $z_{i_0}\ne 0$ and put $\lambda:=\{z_{i_0}\}$, where $\{\cdot\}$ denotes the distance to the nearest integer.
		\item if $y\in \mathcal{L}$, put $\lambda:=1$.
	\end{enumerate}
	Then, we have 
	\[
	c_1:=\max\limits_{1\le j\le k}\left\{\dfrac{||b_1||}{||b_j^*||}\right\}\qquad\text{and}\qquad	
	\delta:=\lambda\dfrac{||b_1||}{c_1}.
	\]
\end{lemma}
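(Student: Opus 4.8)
The statement as written only introduces the quantities $c_1$ and $\delta$; the content I would prove is the lower bound $l(\mathcal{L},y)\ge\delta$. My whole plan rests on one elementary observation about the Gram--Schmidt data, which I would isolate first and then feed into each branch of the definition of $l(\mathcal{L},y)$. The observation is this: for any vector $v=\sum_{i=1}^{k}c_i b_i$ with real coefficients $c_i$, if $i_1$ is the largest index with $c_{i_1}\ne 0$, then expanding $v$ through $b_i=b_i^*+\sum_{j<i}\mu_{i,j}b_j^*$ shows that the coefficient of $b_{i_1}^*$ in $v$ is exactly $c_{i_1}$: the only basis vector that can produce a $b_{i_1}^*$-term is $b_{i_1}$ itself, since all $b_i$ with $i<i_1$ involve only $b_j^*$ with $j<i_1$, and all $c_i$ with $i>i_1$ vanish. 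Orthogonality of the $b_j^*$ then yields immediately $\|v\|\ge |c_{i_1}|\,\|b_{i_1}^*\|\ge |c_{i_1}|\min_{1\le j\le k}\|b_j^*\|$. I would also record the trivial identity $\|b_1\|/c_1=\min_{1\le j\le k}\|b_j^*\|$, so that $\delta=\lambda\min_j\|b_j^*\|$.

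For the case $y\in\mathcal{L}$ (where $\lambda=1$), I take any nonzero $x=\sum r_i b_i\in\mathcal{L}$ with integer coordinates not all zero, let $i_1$ be its top nonzero index, and apply the engine: $\|x\|\ge |r_{i_1}|\min_j\|b_j^*\|\ge \min_j\|b_j^*\|=\delta$, because $|r_{i_1}|\ge 1$. Minimizing over nonzero $x$ gives $l(\mathcal{L},y)\ge\delta$.

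For the case $y\notin\mathcal{L}$, if $\lambda=0$ the bound is vacuous, so I assume $z_{i_0}\notin\mathbb{Z}$. Fixing any $x=\sum r_i b_i\in\mathcal{L}$, I set $v=x-y=\sum_i(r_i-z_i)b_i$ and let $i_1$ be the largest index with $r_{i_1}-z_{i_1}\ne 0$. Since $z_j=0$ for $j>i_0$ by maximality of $i_0$, and since $r_{i_0}=z_{i_0}$ is impossible ($r_{i_0}\in\mathbb{Z}$, $z_{i_0}\notin\mathbb{Z}$), one gets $i_1\ge i_0$. The engine gives $\|x-y\|\ge|r_{i_1}-z_{i_1}|\min_j\|b_j^*\|$, and it remains to bound $|r_{i_1}-z_{i_1}|\ge\lambda$: if $i_1=i_0$ this is precisely the distance from $z_{i_0}$ to the nearest integer, namely $\lambda$; if $i_1>i_0$ then $z_{i_1}=0$ and $r_{i_1}$ is a nonzero integer, so the quantity is at least $1\ge\lambda$. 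Minimizing over $x$ yields $l(\mathcal{L},y)\ge\delta$.

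The single place needing care — and what I would flag as the main subtlety — is the vanishing of the Gram--Schmidt cross terms. A naive attempt to bound the $b_{i_0}^*$-component of $x-y$ produces uncontrollable spurious terms $\sum_{j>i_0}(r_j-z_j)\mu_{j,i_0}$. The resolution is to project not onto $b_{i_0}^*$ but onto $b_{i_1}^*$ for the top nonzero index $i_1$ of $x-y$ itself; then no higher coefficients exist, the cross terms disappear identically, and the component collapses to $r_{i_1}-z_{i_1}$. It is worth noting that the LLL-reduction hypotheses are not actually needed for the inequality, which holds for any basis; reduction is what makes $\min_j\|b_j^*\|=\|b_1\|/c_1$ a numerically large, and hence useful, lower bound in the applications.
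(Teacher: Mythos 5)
Your proof is correct, and it is essentially the standard argument. Note that the paper itself gives no proof of this lemma at all: it is imported from the literature (Smart, \emph{The algorithmic resolution of Diophantine equations}, Section V.4, following de Weger), and the proof given there is exactly yours — expand $x-y$ over the basis, project onto the Gram--Schmidt vector $b_{i_1}^*$ attached to the \emph{top} nonzero coefficient so that all cross terms vanish identically, and then bound that coefficient below by $\lambda$ (distinguishing $i_1=i_0$ from $i_1>i_0$). Your reading of the awkwardly phrased statement — that the actual content is the bound $l(\mathcal{L},y)\ge\delta=\lambda\min_{j}\|b_j^*\|$, the displayed formulas being mere definitions — matches how the lemma is used later in the paper (the quantity $\delta$ is what feeds the hypothesis $\delta^2\ge T^2+S$ of Lemma \ref{lem2.6m}), and your closing remark is also accurate: LLL-reducedness plays no role in the inequality itself, only in guaranteeing that $\|b_1\|/c_1=\min_j\|b_j^*\|$ is large enough to be useful.
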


In our application, we are given real numbers $\eta_0,\eta_1,\ldots,\eta_k$ which are linearly independent over $\mathbb{Q}$ and two positive constants $c_3$ and $c_4$ such that 
\begin{align}\label{2.9m}
	|\eta_0+x_1\eta_1+\cdots +x_k \eta_k|\le c_3 \exp(-c_4 H),
\end{align}
where the integers $x_i$ are bounded as $|x_i|\le X_i$ with $X_i$ given upper bounds for $1\le i\le k$. We write $X_0:=\max\limits_{1\le i\le k}\{X_i\}$. The basic idea in such a situation, from \cite{Weg}, is to approximate the linear form \eqref{2.9m} by an approximation lattice. So, we consider the lattice $\mathcal{L}$ generated by the columns of the matrix
$$ \mathcal{A}=\begin{pmatrix}
	1 & 0 &\ldots& 0 & 0 \\
	0 & 1 &\ldots& 0 & 0 \\
	\vdots & \vdots &\vdots& \vdots & \vdots \\
	0 & 0 &\ldots& 1 & 0 \\
	\lfloor C\eta_1\rfloor & \lfloor C\eta_2\rfloor&\ldots & \lfloor C\eta_{k-1}\rfloor& \lfloor C\eta_{k} \rfloor
\end{pmatrix} ,$$
where $C$ is a large constant usually of the size of about $X_0^k$ . Let us assume that we have an LLL--reduced basis $b_1,\ldots, b_k$ of $\mathcal{L}$ and that we have a lower bound $l\left(\mathcal{L},y\right)\ge c_1$ with $y:=(0,0,\ldots,-\lfloor C\eta_0\rfloor)$. Note that $ c_1$ can be computed by using the results of Lemma \ref{lem2.5m}. Then, with these notations the following result  is Lemma VI.1 in \cite{SMA}.
\begin{lemma}[Lemma VI.1 in \cite{SMA}]\label{lem2.6m}
	Let $S:=\displaystyle\sum_{i=1}^{k-1}X_i^2$ and $T:=\dfrac{1+\sum_{i=1}^{k}X_i}{2}$. If $\delta^2\ge T^2+S$, then inequality \eqref{2.9m} implies that we either have $x_1=x_2=\cdots=x_{k-1}=0$ and $x_k=-\dfrac{\lfloor C\eta_0 \rfloor}{\lfloor C\eta_k \rfloor}$, or
	\[
	H\le \dfrac{1}{c_4}\left(\log(Cc_3)-\log\left(\sqrt{\delta^2-S}-T\right)\right).
	\]
\end{lemma}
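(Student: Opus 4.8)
The plan is to recast the analytic inequality \eqref{2.9m} as a statement about how close a certain lattice point can be to the fixed vector $y$, and then to pit the upper bound on that distance (forced by the smallness of the linear form) against the LLL--guaranteed lower bound $\delta$. To each admissible integer vector $\mathbf{x}=(x_1,\ldots,x_k)^{T}$ with $|x_i|\le X_i$ I would associate the lattice point $v:=\mathcal{A}\mathbf{x}\in\mathcal{L}$. From the shape of the matrix $\mathcal{A}$, the first $k-1$ coordinates of $v$ are $x_1,\ldots,x_{k-1}$ and its last coordinate is $\sum_{i=1}^{k}x_i\lfloor C\eta_i\rfloor$; hence the first $k-1$ coordinates of $v-y$ are again $x_1,\ldots,x_{k-1}$, while its last coordinate is
\[
\Lambda_C:=\lfloor C\eta_0\rfloor+\sum_{i=1}^{k}x_i\lfloor C\eta_i\rfloor .
\]

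The argument then splits according to whether $v=y$. If $v=y$, comparing the first $k-1$ coordinates forces $x_1=\cdots=x_{k-1}=0$, and the last coordinate then reduces to $x_k\lfloor C\eta_k\rfloor=-\lfloor C\eta_0\rfloor$, i.e. $x_k=-\lfloor C\eta_0\rfloor/\lfloor C\eta_k\rfloor$; this is precisely the first alternative in the conclusion. Otherwise $v\neq y$, so $v-y$ is a nonzero difference between a lattice point and $y$, and since $\delta$ is a lower bound for $l(\mathcal{L},y)$ obtained via Lemma \ref{lem2.5m}, we get $\|v-y\|\ge\delta$. Consequently
\[
\delta^{2}\le\|v-y\|^{2}=\sum_{i=1}^{k-1}x_i^{2}+\Lambda_C^{2}\le S+\Lambda_C^{2}.
\]

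It remains to bound $\Lambda_C$ using the smallness of the true linear form. Replacing each $\lfloor C\eta_i\rfloor$ by $C\eta_i$ minus its rounding error, one obtains
\[
\Lambda_C=C\Big(\eta_0+\sum_{i=1}^{k}x_i\eta_i\Big)-\Big(\varepsilon_0+\sum_{i=1}^{k}x_i\varepsilon_i\Big),
\]
where the $\varepsilon_i$ are the rounding errors; by \eqref{2.9m} together with $|x_i|\le X_i$, these contribute at most $T$ in total, so $|\Lambda_C|\le Cc_3\exp(-c_4H)+T$. Substituting into the previous display gives $\delta^{2}-S\le\big(Cc_3\exp(-c_4H)+T\big)^{2}$, and here the hypothesis $\delta^{2}\ge T^{2}+S$ is exactly what guarantees $\sqrt{\delta^{2}-S}-T\ge 0$, so that taking square roots and then logarithms is legitimate:
\[
Cc_3\exp(-c_4H)\ge\sqrt{\delta^{2}-S}-T\ \Longrightarrow\ c_4H\le\log(Cc_3)-\log\!\big(\sqrt{\delta^{2}-S}-T\big).
\]
Dividing by $c_4$ yields the stated bound on $H$.

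I expect the only genuinely delicate point to be the passage from the analytic smallness of $\eta_0+\sum_{i=1}^{k}x_i\eta_i$ to the geometric lower bound: the lattice $\mathcal{L}$ is engineered precisely so that a tiny value of the linear form forces the last coordinate $\Lambda_C$ of $v-y$ to be small, whereas LLL--reducedness forbids a nonzero lattice point from lying within distance $\delta$ of $y$; the conflict between these two facts is what caps $H$. The surrounding bookkeeping --- tallying the rounding errors so that their total contribution is exactly the clean constant $T$, and correctly isolating the coincidence $v=y$ as the exceptional case --- is routine, but it is where care is needed to reach the precise constants appearing in the statement.
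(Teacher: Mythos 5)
The paper contains no proof of this lemma for you to be compared against: it is quoted verbatim as Lemma VI.1 of \cite{SMA}, so the only meaningful comparison is with the standard proof in that source (originally due to de Weger), and your argument reproduces its structure exactly. Associating to an admissible $\mathbf{x}$ the lattice point $v=\mathcal{A}\mathbf{x}$, identifying the exceptional alternative of the lemma with the coincidence $v=y$, and otherwise playing the lower bound $\|v-y\|\ge\delta$ supplied by Lemma \ref{lem2.5m} against an upper bound on the last coordinate of $v-y$ is precisely the intended argument; your case split, the inequality $\delta^{2}\le S+\Lambda_C^{2}$, the observation that $\delta^{2}\ge T^{2}+S$ makes $\sqrt{\delta^{2}-S}-T$ nonnegative, and the passage to logarithms are all sound.

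The one step that is not justified as written is exactly the piece of ``bookkeeping'' you deferred: the claim that the rounding errors contribute at most $T$. With the matrix entries $\lfloor C\eta_i\rfloor$ as the paper writes them, each error $\varepsilon_i:=C\eta_i-\lfloor C\eta_i\rfloor$ lies in $[0,1)$, not in $[-\tfrac12,\tfrac12]$, so the best available bound is
\[
\Bigl|\varepsilon_0+\sum_{i=1}^{k}x_i\varepsilon_i\Bigr|\;\le\;1+\sum_{i=1}^{k}X_i\;=\;2T,
\]
and this is sharp (take all $x_i=X_i$ and all fractional parts near $1$). Hence with floor rounding your display must read $|\Lambda_C|\le Cc_3\exp(-c_4H)+2T$, the hypothesis has to be strengthened to $\delta^{2}\ge(2T)^{2}+S$, and $2T$ replaces $T$ in the conclusion. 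The clean constant $T=\bigl(1+\sum_{i=1}^{k}X_i\bigr)/2$ belongs to the version of the lemma in which $C\eta_0$ and the $C\eta_i$ are rounded to the \emph{nearest} integer, so that every $|\varepsilon_i|\le\tfrac12$; that is how the approximation lattice is set up in the source. The mismatch (floor notation paired with the nearest-integer constant) is inherited from the paper's own statement, but a self-contained proof has to resolve it one way or the other: either build the lattice with nearest-integer entries and keep $T$, or keep the floors and double $T$ throughout. In the paper's numerical applications $\delta$ exceeds $T$ by dozens of orders of magnitude, so nothing downstream is affected; still, as written, the bound by $T$ is the one step of your argument that fails.
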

Python is used to perform all the computations in this work.

\section{Proof of Theorem \ref{thm1.1l}}
\subsection{Estimates on $n$ in terms of $\ell$ and $m$}
In Eq. \eqref{eq:main}, we see that $F_n^{(k)}$ has $2\ell+m$ digits in its palindromic representation. This means that the possible solutions to \eqref{eq:main} lie in the interval
\begin{align*}
	10^{2\ell+m-1} < F_n^{(k)}< 10^{2\ell+m}.
\end{align*}
Comparing the above inequalities with \eqref{fn_up}, we get 
\begin{align*}
	10^{2\ell+m-1} < F_n^{(k)}\le \alpha^{n-1} \qquad \text{and}\qquad \alpha^{n-2} \le F_n^{(k)}< 10^{2\ell+m}.
\end{align*}
Taking logarithms in the above inequalities, we get that the inequality
\begin{align}\label{eq2.4l}
	2\ell+m <n <5(2\ell+m)+2,
\end{align}
holds for $n\ge 9$ and $\ell$, $m\ge 1$. 
\subsection{The case $n\le k+1$}
Here, $F_n^{(k)}$ is a power of 2, so we combine \eqref{eq:main} with \eqref{eq:2.1} and write
\begin{align}\label{eq2.3l}
	2^{n-2}  &=\overline{\underbrace{d_1 \ldots d_1}_{\ell \text{ times}}\underbrace{d_2 \ldots d_2}_{m \text{ times}}\underbrace{d_1 \ldots d_1}_{\ell \text{ times}}}
	=\overline{\underbrace{d_1 \ldots d_1}_{\ell \text{ times}}}\cdot 10^{\ell+m}+\overline{\underbrace{d_2 \ldots d_2}_{m \text{ times}}}\cdot 10^{\ell}+\overline{\underbrace{d_1 \ldots d_1}_{\ell \text{ times}}}\nonumber\\
	&=d_1 \left( \frac{10^\ell - 1}{9} \right)\cdot 10^{\ell+m}+d_2 \left( \frac{10^m - 1}{9} \right)\cdot 10^{\ell}+d_1 \left( \frac{10^\ell - 1}{9} \right),
\end{align}
which similarly can be written as 
\begin{align*}
	  9\cdot 2^{n-2}=d_1 \left( 10^\ell - 1 \right)\cdot 10^{\ell+m}+d_2 \left( 10^m - 1 \right)\cdot 10^{\ell}+d_1 \left( 10^\ell - 1\right),
\end{align*}
where $ d_1, d_2 \in \{0,1,2,\dots,9\} $ with $ d_1 > 0 $, $ d_1 \neq d_2 $ and $\ell$, $m\ge 1$. Reducing the above equation modulo $10^{\ell}$, we get
\begin{align*}
	9\cdot 2^{n-2}\equiv d_1 \left( 10^\ell - 1\right) \pmod{10^\ell}.
\end{align*}
Assume for a moment that $n\ge 6$. Then $9\cdot 2^{n-2}$ and $10^\ell$ are each divisible by 16 when $\ell\ge 4$, but $d_1 \left( 10^\ell - 1\right)$ is not divisible by 16 because $\left( 10^\ell - 1\right)$ is odd and $d_1<10$. Therefore, if $n\ge 6$, then $\ell\le 3$.

Again, we rewrite \eqref{eq2.3l} as 
\begin{align*}
	9\cdot 2^{n-2}=\left(d_1 \cdot 10^{\ell}+(d_2-d_1)\right) \cdot 10^{\ell+m}+(d_1-d_2)\cdot 10^\ell - d_1,
\end{align*}
with $ d_1, d_2 \in \{0,1,2,\dots,9\} $ with $ d_1 > 0 $, $ d_1 \neq d_2 $ and $\ell$, $m\ge 1$. The integer  $|(d_1-d_2)\cdot 10^\ell - d_1|$ is at most $9\cdot 10^3+9<2^{14}$ in absolute value, so if $n\ge 16$, then $2^{14}$ divides $9\cdot 2^{n-2}$. Moreover, $\left(d_1 \cdot 10^{\ell}+(d_2-d_1)\right) \cdot 10^{\ell+m}$ is divisible by $2^{14}$ if $\ell+m\ge 14$ but $(d_1-d_2)\cdot 10^\ell - d_1$ is not divisible by $2^{14}$ for $\ell\in\{1,2,3\}$. Therefore, if $n\ge 16$, then $\ell+m\le 13$, so that $m\le 12$.

To finish off this subsection, we use Python and write a simple program to find all palindromes satisfying \eqref{eq:main} with $\ell \le 3$ and $m\le 12$. We find that none of these palindromes are powers of 2, see Appendix \ref{app0}.

\subsection{The case $n\ge k+2$}
\subsubsection{Bounding $n$ in terms of $k$}
We proceed by proving a series of results. We start with the following.
\begin{lemma}\label{lem:l}
	Let $(\ell, m, n, k)$ be a solution to the Diophantine equation \eqref{eq:main} with $n \ge 9$, $k\ge 2$ and $n\ge k+1$. Then
	$$\ell<4\cdot 10^{12}k^4 (\log k)^2\log n.$$
\end{lemma}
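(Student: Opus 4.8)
The plan is to convert \eqref{eq:main} into a single nonzero linear form in three logarithms and then apply Matveev's theorem. First I would clear the denominator in the palindromic representation, obtaining
\[
9F_n^{(k)} = d_1\cdot 10^{2\ell+m} + (d_2-d_1)\cdot 10^{\ell+m} + (d_1-d_2)\cdot 10^{\ell} - d_1,
\]
so that the leading term is $d_1\cdot 10^{2\ell+m}$ while all remaining terms are $O(10^{\ell+m})$. Substituting the Binet approximation $F_n^{(k)} = f_k(\alpha)\alpha^{n-1}+\varepsilon$ with $|\varepsilon|<1/2$ from \eqref{eq2.6m} and dividing by $d_1\cdot 10^{2\ell+m}$, I set
\[
\Gamma := \frac{9 f_k(\alpha)\,\alpha^{n-1}}{d_1\cdot 10^{2\ell+m}}-1 = \frac{(d_2-d_1)10^{\ell+m}+(d_1-d_2)10^{\ell}-d_1-9\varepsilon}{d_1\cdot 10^{2\ell+m}}.
\]
Since $d_1\ge 1$ and $m\ge 1$, the numerator is dominated by its $10^{\ell+m}$ term, and a crude estimate gives the key upper bound $|\Gamma|<10^{2-\ell}$, hence $\log|\Gamma|<(2-\ell)\log 10$.

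Before invoking Matveev I must verify $\Gamma\ne 0$. If it vanished, then $9f_k(\alpha)\alpha^{n-1}=d_1\cdot 10^{2\ell+m}$ would be a positive rational; applying a Galois automorphism of $\mathbb{Q}(\alpha)$ that sends $\alpha$ to a root $\alpha_i$ inside the unit disk and using $|f_k(\alpha_i)|<1$ (noted in Section 2) would give $d_1\cdot 10^{2\ell+m}=|9f_k(\alpha_i)\alpha_i^{n-1}|<9$, contradicting $2\ell+m\ge 3$. Thus $\Gamma\ne 0$.

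Next I apply Theorem \ref{thm:Matl} with $t=3$ and the data $\gamma_1=\alpha$, $\gamma_2=10$, $\gamma_3=9f_k(\alpha)/d_1$, $b_1=n-1$, $b_2=-(2\ell+m)$, $b_3=1$. All three numbers lie in $\mathbb{K}=\mathbb{Q}(\alpha)$, so $D=k$. By \eqref{eq2.4l} we have $2\ell+m<n$, so $B:=n$ is admissible. For the heights I use $h(\alpha)=(\log\alpha)/k$, giving $A_1=\log\alpha$; next $A_2=k\log 10$; and, from $h(9f_k(\alpha)/d_1)\le \log 9 + h(f_k(\alpha))<\log 9+3\log k$ via \eqref{eqh}, I take $A_3=k(3\log k+\log 9)$.

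Finally I combine the Matveev lower bound on $\log|\Gamma|$ with the upper bound $\log|\Gamma|<(2-\ell)\log 10$ and solve for $\ell$, which yields
\[
\ell < 2 + \frac{1.4\cdot 30^{6}\cdot 3^{4.5}\cdot k^{2}(1+\log k)(1+\log n)\,A_1A_2A_3}{\log 10}.
\]
Substituting the $A_i$ produces precisely four factors of $k$ together with the logarithmic factor $(1+\log k)(3\log k+\log 9)(1+\log n)$, which I bound by a fixed multiple of $(\log k)^2\log n$ (handling $k=2$, where $\log k<1$, by direct substitution). The main obstacle is not conceptual but arithmetical: carefully tracking the large Matveev constant together with the height $h(f_k(\alpha))$, and verifying that after collapsing the three logarithmic factors into $(\log k)^2\log n$ the resulting numerical constant still stays below $4\cdot 10^{12}$, which it does with room to spare.
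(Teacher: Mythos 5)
Your proposal is correct and follows essentially the same route as the paper's own proof: the same expansion of the palindrome, the same quantity $\Gamma$ (with the roles of $\gamma_1,\gamma_2,\gamma_3$ merely permuted), the same Galois-conjugation argument for $\Gamma\ne 0$, and the same application of Matveev's theorem with $D=k$, $B=n$; your slightly sharper choices ($A=\log\alpha$ instead of $0.7$, and $h(9f_k(\alpha)/d_1)\le \log 9+3\log k$ instead of $10\log k$) and the cruder bound $|\Gamma|<10^{2-\ell}$ only change the final constant marginally, and your arithmetic indeed lands below $4\cdot 10^{12}$.
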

\begin{proof}
Suppose that $n$, $k$ are positive integer solutions to \eqref{eq:main} with $n\ge k+2$ and $k\ge 2$. We rewrite \eqref{eq2.6m} using \eqref{eq2.3l} as
\begin{align*}
	\left|\dfrac{1}{9}\left(d_1\cdot 10^{2\ell+m}-(d_1-d_2)\cdot 10^{\ell+m} +(d_1-d_2)\cdot 10^{\ell}-d_1 \right)-f_k(\alpha)\alpha^{n-1}\right|<\dfrac{1}{2}.
\end{align*}
This tells us that for $\ell$, $m\ge 1$,
\begin{align*}
	\left|f_k(\alpha)\alpha^{n-1}-\dfrac{1}{9}\left(d_1\cdot 10^{2\ell+m}\right)\right|
	&<\dfrac{1}{2}+\dfrac{1}{9}\left(|d_1-d_2|\cdot 10^{\ell+m} +|d_1-d_2|\cdot 10^{\ell}+d_1 \right)\\
	&\le \dfrac{1}{2}+ 10^{\ell+m} +10^{\ell}+1 \\
	&=10^{\ell+m}\left(\dfrac{3}{2\cdot 10^{\ell+m}}+ 1 +\dfrac{1}{ 10^{m}} \right)\\
	&\le 10^{\ell+m}\left(\dfrac{3}{2\cdot 10^{2}}+ 1 +\dfrac{1}{ 10^{1}} \right)\\
	&<1.2\cdot 10^{\ell+m}.
\end{align*}

Dividing through both sides of the above inequality by $(d_1\cdot 10^{2\ell+m})/9$, we get
\begin{align}\label{gam1}
	\left|\dfrac{9}{d_1}\cdot 10^{-2\ell-m}f_k(\alpha)\alpha^{n-1}-1\right|
	&<11\cdot 10^{-\ell}.
\end{align}
Let 
$$
\Gamma_1=\dfrac{9}{d_1}\cdot 10^{-2\ell-m}f_k(\alpha)\alpha^{n-1}-1.
$$
Notice that $\Gamma_1\ne 0$, otherwise we would have
\[
d_1\cdot 10^{2\ell + m} = 9 f_k(\alpha) \alpha^{n-1}.
\]
Applying an automorphism from the Galois group of the decomposition field of \(\Psi_k(x)\) over \(\mathbb{Q}\), which maps \(\alpha\) to its conjugate \(\alpha_j\) for \(2 \leq j \leq k\), and then considering the absolute value, we derive the following for any \(j \geq 2\):
\[
10^3 \le  d_1\cdot  10^{2\ell + m} = \left| 9 f_k(\alpha_j) \alpha_j^{n-1} \right| < 9,
\]
which is not true. The algebraic number field containing the following $\gamma_i$'s is $\mathbb{K} := \mathbb{Q}(\alpha)$. We have $D = k$, $t :=3$,
\begin{equation}\nonumber
	\begin{aligned}
		\gamma_{1}&:=9f_k(\alpha)/d_1,\quad\gamma_{2}:=\alpha,\qquad~~\gamma_{3}:=10,\\
		b_{1}&:=1,\qquad \qquad~~~ b_{2}:=n-1,\quad b_{3}:=-2\ell-m.
	\end{aligned}
\end{equation}
Since 
\begin{align*}
h(\gamma_{1})=h(9f_k(\alpha)/d_1)< \log 9+3\log k+\log 9 <10\log k,
\end{align*}
by \eqref{eqh}, $h(\gamma_{2})=h(\alpha)<(\log \alpha)/k <0.7/k$ and $h(\gamma_{3})=h(10)= \log 10 $, we take $A_1:=10k\log k$, $A_2:=0.7$ and $A_3:=k\log 10$. Next, $B \geq \max\{|b_i|:i=1,2,3\}$. By \eqref{eq2.4l}, $2\ell+m<n$, so we take $B:=n$. Now, by Theorem \ref{thm:Matl},
\begin{align}\label{gam2}
	\log |\Gamma_1| &> -1.4\cdot 30^{6} \cdot 3^{4.5}\cdot k^2 (1+\log k)(1+\log n)\cdot 10k\log k\cdot 0.7\cdot k\log 10\nonumber\\
	&> -8.4\cdot 10^{12}k^4 (\log k)^2\log n.
\end{align}
for $k\ge 2$ and $n\ge 9$. Comparing \eqref{gam1} and \eqref{gam2}, we get
\begin{align*}
	\ell\log 10-\log 11 &<8.4\cdot 10^{12}k^4 (\log k)^2\log n,
\end{align*}
which leads to $\ell<4\cdot 10^{12}k^4 (\log k)^2\log n$.
\end{proof}

Next, we prove the following.
\begin{lemma}\label{lem:m}
	Let $(\ell, m, n, k)$ be a solution to the Diophantine equation \eqref{eq:main} with $n \ge 9$, $k\ge 2$ and $n\ge k+1$. Then
	$$m<3.5\cdot 10^{24}k^8 (\log k)^3(\log n)^2\qquad\text{and}\qquad	n	<3\cdot 10^{31}k^8 (\log k)^5.$$
\end{lemma}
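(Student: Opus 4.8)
The plan is to run the same Baker--Matveev machinery as in Lemma~\ref{lem:l}, but with a finer truncation of the decimal expansion so that the surviving linear form detects $m$ rather than $\ell$. Recall from \eqref{eq2.3l} that
\begin{align*}
	9F_n^{(k)}=d_1\cdot 10^{2\ell+m}-(d_1-d_2)\cdot 10^{\ell+m}+(d_1-d_2)\cdot 10^{\ell}-d_1.
\end{align*}
In Lemma~\ref{lem:l} only the leading block $d_1\cdot 10^{2\ell+m}$ was retained. Now I would instead group the first two blocks together, writing the right-hand side as $10^{\ell+m}N+\big((d_1-d_2)\cdot 10^{\ell}-d_1\big)$ with $N:=d_1\cdot 10^{\ell}-(d_1-d_2)$. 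Since $d_1\ge 1$, one checks $N\ge 10^{\ell}-1\ge \tfrac12\cdot 10^{\ell}$, and the discarded tail satisfies $\big|(d_1-d_2)\cdot 10^{\ell}-d_1\big|\le 9\cdot 10^{\ell}+9$. Feeding this into the second estimate of \eqref{eq2.6m} and dividing by $N\cdot 10^{\ell+m}$ gives, after using $\ell\ge 1$,
\begin{align*}
	\left|\frac{9f_k(\alpha)}{N}\cdot\alpha^{n-1}\cdot 10^{-\ell-m}-1\right|<\frac{c_0}{10^{m}}
\end{align*}
for an absolute constant $c_0$; the key gain over \eqref{gam1} is that the right-hand side now decays like $10^{-m}$.

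Next I would set $\Gamma_2:=\tfrac{9f_k(\alpha)}{N}\alpha^{n-1}10^{-\ell-m}-1$ and apply Matveev with $\gamma_1:=9f_k(\alpha)/N$, $\gamma_2:=\alpha$, $\gamma_3:=10$ and exponents $b_1:=1$, $b_2:=n-1$, $b_3:=-\ell-m$, so $D=k$, $t=3$ and $B:=n$. Nonvanishing of $\Gamma_2$ follows exactly as for $\Gamma_1$: if $\Gamma_2=0$ then $9f_k(\alpha)\alpha^{n-1}=N\cdot 10^{\ell+m}$, and applying a Galois automorphism sending $\alpha$ to a conjugate $\alpha_j$ inside the unit disc gives $|9f_k(\alpha_j)\alpha_j^{n-1}|<9$, contradicting that the right-hand side is the fixed integer $N\cdot 10^{\ell+m}\ge 100$.

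The one genuinely new ingredient is the height of $\gamma_1$. Because $N$ is now an integer of size about $10^{\ell}$, we have $h(\gamma_1)\le h(9)+h(f_k(\alpha))+\log N$, and here I would substitute the bound $\ell<4\cdot 10^{12}k^4(\log k)^2\log n$ from Lemma~\ref{lem:l} to obtain $h(\gamma_1)\ll 10^{13}k^4(\log k)^2\log n$, whence $A_1\approx k\,h(\gamma_1)\ll 10^{13}k^5(\log k)^2\log n$; as before $A_2:=0.7$ and $A_3:=k\log 10$. This extra factor $\sim\ell$ in $A_1$ is precisely what promotes the $\ell$-bound into an $m$-bound one order larger. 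Matveev then yields
\begin{align*}
	\log|\Gamma_2|>-C\,k^8(\log k)^3(\log n)^2,
\end{align*}
and comparing with $|\Gamma_2|<c_0\,10^{-m}$ gives $m\log 10-\log c_0<C\,k^8(\log k)^3(\log n)^2$, i.e. the stated bound $m<3.5\cdot 10^{24}k^8(\log k)^3(\log n)^2$.

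Finally, to bound $n$, I would combine the upper inequality $n<5(2\ell+m)+2$ from \eqref{eq2.4l} with the bounds on $\ell$ and $m$ just obtained; the $m$-term dominates, producing $n<A(\log n)^2$ with $A\ll 10^{25}k^8(\log k)^3$. Bounding $\log A\le c_1\log k$ uniformly for $k\ge 2$ (the additive constant $\log(10^{25})$ is absorbed since $\log k\ge\log 2$) and invoking the standard lemma for inequalities of the shape $x<A(\log x)^2$ then delivers $n<3\cdot 10^{31}k^8(\log k)^5$. I expect the main obstacle to be the bookkeeping around $h(\gamma_1)$: one must track the constant in $\log N$ carefully so that the Matveev output lands at the advertised order $10^{24}$, and then solve the resulting transcendental inequality for $n$ without losing the clean $(\log k)^5$ shape.
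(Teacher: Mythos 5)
Your proposal is correct and follows essentially the same route as the paper's proof: you retain the two leading blocks $d_1\cdot 10^{2\ell+m}-(d_1-d_2)\cdot 10^{\ell+m}=N\cdot 10^{\ell+m}$ with $N:=d_1\cdot 10^{\ell}-(d_1-d_2)$, apply Matveev to the same three logarithms with $B:=n$ and with the height of $\gamma_1=9f_k(\alpha)/N$ inflated by the bound on $\ell$ from Lemma~\ref{lem:l}, prove nonvanishing by the same Galois conjugation argument, and finally feed the $\ell$- and $m$-bounds into \eqref{eq2.4l} to solve $n<A(\log n)^2$. The paper does exactly this, only with explicit constants throughout ($11\cdot 10^{-m}$ where you have $c_0\cdot 10^{-m}$, $A_1:=9.22\cdot 10^{12}k^5(\log k)^2\log n$, and $7.9\cdot 10^{24}k^8(\log k)^3(\log n)^2$ from Matveev).
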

\begin{proof}
	As before, we again rewrite \eqref{eq2.6m} using \eqref{eq2.3l} as
	\begin{align*}
		\left|\dfrac{1}{9}\left(d_1\cdot 10^{2\ell+m}-(d_1-d_2)\cdot 10^{\ell+m} +(d_1-d_2)\cdot 10^{\ell}-d_1 \right)-f_k(\alpha)\alpha^{n-1}\right|<\dfrac{1}{2}.
	\end{align*}
	This tells us that
	\begin{align*}
		\left|f_k(\alpha)\alpha^{n-1}-\dfrac{1}{9}\left(d_1\cdot 10^{2\ell+m}-(d_1-d_2)\cdot 10^{\ell+m}\right)\right|
		&<\dfrac{1}{2}+\dfrac{1}{9}\left(|d_1-d_2|\cdot 10^{\ell}+d_1 \right)\\
		&<\dfrac{1}{2}+ 10^{\ell}+1
		=10^{\ell}\left(\dfrac{3}{2\cdot 10^{\ell}}+ 1 \right)\\
		&<10^{\ell}\left(\dfrac{3}{2\cdot 10^{1}}+ 1  \right)\\
		&<1.2\cdot 10^{\ell},
	\end{align*}
	for $\ell\ge 1$. 
	
Dividing through both sides of the above inequality by $(d_1\cdot 10^{2\ell+m}-(d_1-d_2)\cdot 10^{\ell+m})/9$, we get
	\begin{align}\label{gam3}
		\left|\dfrac{9 }{(d_1\cdot 10^{\ell}-(d_1-d_2))}\cdot 10^{-\ell-m}f_k(\alpha)\alpha^{n-1}-1\right|
		&<11\cdot 10^{-m}.
	\end{align}
	Let 
	$$
	\Gamma_2=\dfrac{9 }{(d_1\cdot 10^{\ell}-(d_1-d_2))}\cdot 10^{-\ell-m}f_k(\alpha)\alpha^{n-1}-1.
	$$
Again, $\Gamma_2\ne 0$, otherwise we would have
\[
	d_1\cdot 10^{2\ell+m}-(d_1-d_2)\cdot 10^{\ell+m} = 9 f_k(\alpha) \alpha^{n-1}.
\]
By a similar argument as before, if we apply an automorphism from the Galois group of the decomposition field of \(\Psi_k(x)\) over \(\mathbb{Q}\), which maps \(\alpha\) to its conjugate \(\alpha_j\) for \(2 \leq j \leq k\), and then considering the absolute value, we have that for any \(j \geq 2\),
	\[
	10^2 \le  d_1\cdot 10^{2\ell+m}-(d_1-d_2)\cdot 10^{\ell+m} = \left| 9 f_k(\alpha_j) \alpha_j^{n-1} \right| < 9,
	\]
	which is false. The algebraic number field containing the following $\gamma_i$'s is $\mathbb{K} := \mathbb{Q}(\alpha)$. We have $D = k$, $t :=3$,
	\begin{equation}\nonumber
		\begin{aligned}
			\gamma_{1}&:=9f_k(\alpha)/(d_1\cdot 10^{\ell}-(d_1-d_2)),\quad\gamma_{2}:=\alpha,\qquad~~\gamma_{3}:=10,\\
			b_{1}&:=1,\qquad \qquad\qquad\qquad\qquad\qquad~~~ b_{2}:=n-1,\quad b_{3}:=-\ell-m.
		\end{aligned}
	\end{equation}
	Since 
	\begin{align*}
		h(\gamma_{1})&=h(9f_k(\alpha)/(d_1\cdot 10^{\ell}-(d_1-d_2))
		\le h(9)+h(f_k(\alpha))+h(d_1\cdot 10^{\ell}-(d_1-d_2))\\
		&< \log 9+3\log k+\log 9+\ell\log 10+\log 9+\log 2 \\
		&< \log 9+3\log k+\log 9+\left(4\cdot 10^{12}k^4 (\log k)^2\log n\right)\log 10+\log 9+\log 2 \\
		&=k^4 (\log k)^2\log n\left(\dfrac{4\log 9}{k^4 (\log k)^2\log n}+\dfrac{3}{k^4 \log k\log n}+4\cdot 10^{12}\log 10\right)\\
		&<9.22\cdot 10^{12}k^4 (\log k)^2\log n,
	\end{align*}
	by \eqref{eqh}, we can take $A_1:=9.22\cdot 10^{12}k^5 (\log k)^2\log n$. As before, $A_2:=0.7$, $A_3:=k\log 10$, and $B:=n$. 
	
	Now, by Theorem \ref{thm:Matl}, we have that for $k\ge 2$ and $n\ge 9$
	\begin{align}\label{gam4}
		\log |\Gamma_2| &> -1.4\cdot 30^{6} \cdot 3^{4.5}\cdot k^2 (1+\log k)(1+\log n)\cdot 9.22\cdot 10^{12}k^5 (\log k)^2\log n\cdot 0.7\cdot k\log 10\nonumber\\
		&> -7.9\cdot 10^{24}k^8 (\log k)^3(\log n)^2.
	\end{align}
	 Comparing \eqref{gam3} and \eqref{gam4}, we get
	\begin{align*}
		m\log 10-\log 11 &<7.9\cdot 10^{24}k^8 (\log k)^3(\log n)^2,
	\end{align*}
	which leads to 
	$$m<3.5\cdot 10^{24}k^8 (\log k)^3(\log n)^2.$$

At this point, we go back to \eqref{eq2.4l} and use the inequality above and the bound in Lemma \ref{lem:l}, that is
\begin{align*}
	n&<	5(2\ell+m)+2\\
	&<5\left(2\cdot 4\cdot 10^{12}k^4 (\log k)^2\log n+3.5\cdot 10^{24}k^8 (\log k)^3(\log n)^2\right)+2\\
	&=21\cdot 10^{24}k^8 (\log k)^3(\log n)^2\left(\dfrac{8\cdot 10^{12}k^4 (\log k)^2\log n}{ 3.5\cdot 10^{24}k^8 (\log k)^3(\log n)^2}+1+\dfrac{2/5}{3.5\cdot 10^{24}k^8 (\log k)^3(\log n)^2} \right)                  \\
	&<2.2\cdot 10^{25}k^8 (\log k)^3(\log n)^2.
\end{align*}
We get
\begin{align}\label{boundn}
	n	&<3\cdot 10^{31}k^8 (\log k)^5.
\end{align}
This completes the proof of Lemma \ref{lem:m}
\end{proof}
We proceed by distinguishing between two cases on $k$.

\subsubsection{Case I: $k \leq 900$} 
Assuming that $k \leq 900$, it follows from \eqref{boundn} that  
\[
n < 3 \cdot 10^{31} k^8 (\log k)^5 < 1.9 \cdot 10^{59}.
\]
Our objective is to derive a tighter upper bound for $n$. To accomplish this, we revisit \eqref{gam1} and recall the previously obtained result:
\[
\Gamma_1 := \frac{9}{d_1} \cdot 10^{-2\ell - m} f_k(\alpha) \alpha^{n-1} - 1 = e^{\Lambda_1} - 1.
\]
Since we established that $\Gamma_1 \neq 0$, it follows that $\Lambda_1 \neq 0$. Now, assuming $\ell \geq 2$, we obtain the inequality  
\[
\left| e^{\Lambda_1} - 1 \right| = |\Gamma_1| < 0.5,
\]
which leads to $e^{|\Lambda_1|} \leq 1 + |\Gamma_1| < 1.5$. Consequently, we derive  
\[
\left| (n - 1) \log \alpha - (2\ell + m) \log 10 + \log \left( \frac{9 f_k(\alpha)}{d_1} \right) \right| < \frac{16.5}{10^\ell}.
\]
Now, we apply the LLL-algorithm for each $k \in [2,900]$ and $d_1 \in \{1, \dots, 9\}$ to obtain a lower bound for the smallest nonzero value of the above linear form, constrained by integer coefficients with absolute values not exceeding $n < 1.9 \cdot 10^{59}$. Specifically, we consider the lattice  
\[
\mathcal{A} = \begin{pmatrix} 
	1 & 0 & 0 \\ 
	0 & 1 & 0 \\ 
	\lfloor C\log \alpha\rfloor & \lfloor C\log (1/10)\rfloor & \lfloor C\log \left(9 f_k(\alpha)/d_1\right) \rfloor
\end{pmatrix},
\]
where we set $C := 2.1\cdot 10^{178}$ and $y := (0,0,0)$. Applying Lemma \ref{lem2.5m}, we obtain 
\[
l(\mathcal{L},y) = |\Lambda| > c_1 = 10^{-63} \quad \text{and} \quad \delta = 10^{61}. 
\]
Using Lemma \ref{lem2.6m}, we conclude that $S =1.1 \cdot 10^{119}$ and $T = 5.8 \cdot 10^{59}$. Given that $\delta^2 \geq T^2 + S$, and selecting $c_3 := 16.5$ and $c_4 := \log 10$, we establish the bound $\ell \leq 118$.

Next, we revisit \eqref{gam3}, that is,
$$\Gamma_2:=\dfrac{9 }{(d_1\cdot 10^{\ell}-(d_1-d_2))}\cdot 10^{-\ell-m}f_k(\alpha)\alpha^{n-1}-1=e^{\Lambda_2}-1.$$
We already showed that $\Gamma_2\ne 0$, so $\Lambda_2\ne 0$. Assume for a moment that $m\ge 2$. As before, this implies that 
\begin{align*}
	\left|(n - 1) \log \alpha - (\ell + m) \log 10 + \log \left( \dfrac{9 f_k(\alpha)}{d_1\cdot 10^{\ell}-(d_1-d_2)} \right)\right|<\dfrac{16.5}{10^m}.
\end{align*}
Now, for each $k \in [2, 900]$, $\ell\in[1,118]$, $d_1, d_2\in \{0,\ldots,9\}$, $d_1>0$ and $d_1\ne d_2$, we use the LLL--algorithm to compute a lower bound for the smallest nonzero number of the linear form above with integer coefficients not exceeding $n<1.9\cdot 10^{59}$ in absolute value. So, we consider the approximation lattice
$$ \mathcal{A}=\begin{pmatrix}
	1 & 0 & 0 \\
	0 & 1 & 0 \\
	\lfloor C\log \alpha\rfloor & \lfloor C\log (1/10)\rfloor& \lfloor C\log \left(9 f_k(\alpha)/(d_1\cdot 10^{\ell}-(d_1-d_2))\right)\rfloor
\end{pmatrix} ,$$
with $C:=  10^{179}$ and choose $y:=\left(0,0,0\right)$. Now, by Lemma \ref{lem2.5m}, we get $$l\left(\mathcal{L},y\right)=|\Lambda|>c_1=10^{-62}\qquad\text{and}\qquad\delta=10^{60}.$$
So, Lemma \ref{lem2.6m} gives $S =1.1 \cdot 10^{119}$ and $T = 5.8 \cdot 10^{59}$ as before. So choosing $c_3:=16.5$ and $c_4:=\log 10$, we get $m \le 120$.

Therefore, we use \eqref{eq2.4l} to get $n\le 2138$. Finally, we use Python to find all $F_n^{(k)}$ values in the ranges $k\in[2,900]$ and $n\in[8,2138]$ that are palindromic concatenation of two distinct repdigits. We find only one value given in Theorem \ref{thm1.1l}, see Appendix \ref{app1}.

\subsubsection{Case II: $k > 900$}  
If $k > 900$, then 
\begin{align}\label{boundn2}
	n	&<3\cdot 10^{31}k^8 (\log k)^5<2^{k/2}. 
\end{align}
We prove the following result.
\begin{lemma}\label{lem:nm}
	Let $(\ell, m, n, k)$ be a solution to the Diophantine equation \eqref{eq:main} with $n \ge 9$, $k>900$ and $n\ge k+1$. Then
	$$k<3.2\cdot 10^{31} \qquad\text{and}\qquad n<6.7\cdot 10^{292}.$$
\end{lemma}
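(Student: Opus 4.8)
The plan is to exploit the hypothesis $k>900$ through the sharp approximation \eqref{sharp}. By \eqref{boundn2} we already know that $n<2^{k/2}$, so \eqref{sharp} is in force and gives $\left|F_n^{(k)}-2^{n-2}\right|<2/2^{k/2}<1/2$. The point is that this may be used \emph{in place of} the Binet approximation \eqref{eq2.6m}: it lets me substitute the transcendental main term $f_k(\alpha)\alpha^{n-1}$ by the rational power $2^{n-2}$ at the cost of an error that is still smaller than $1$. I would therefore repeat, essentially verbatim, the computations of Lemmas \ref{lem:l} and \ref{lem:m}, but starting from \eqref{eq2.3l} written as $\left|\tfrac19\left(d_1\cdot 10^{2\ell+m}-(d_1-d_2)10^{\ell+m}+(d_1-d_2)10^{\ell}-d_1\right)-2^{n-2}\right|<1/2$.

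Running the two reductions exactly as before produces the inequalities $\left|\tfrac{9}{d_1}\,2^{n-2}10^{-2\ell-m}-1\right|<11\cdot 10^{-\ell}$ and $\left|\tfrac{9}{d_1\cdot 10^{\ell}-(d_1-d_2)}\,2^{n-2}10^{-\ell-m}-1\right|<11\cdot 10^{-m}$. The decisive gain is that the three numbers appearing in each of these linear forms are now $2$, $10$ and a rational, so the underlying field is $\mathbb{Q}$ and the degree parameter in Theorem \ref{thm:Matl} collapses from $D=k$ to $D=1$. Consequently the factor $D^2(1+\log D)$ becomes $1$, while $A_2=\log 2$ and $A_3=\log 10$ are absolute; only the height of $9/(d_1\cdot 10^{\ell}-(d_1-d_2))$ still carries a factor of size $\ell$. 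Matveev's theorem then yields bounds of the shape $\ell<C_1(1+\log n)$ and $m<C_2(1+\log n)^2$ in which $C_1,C_2$ are absolute constants, i.e. completely free of the parasitic factors $k^4(\log k)^2$ and $k^8(\log k)^3$ that burdened Lemmas \ref{lem:l} and \ref{lem:m}.

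With $\ell$ and $m$ so controlled, I would return to \eqref{eq2.4l} and write $n<5(2\ell+m)+2<C_3(1+\log n)^2$ for an absolute constant $C_3$. Invoking $n\ge k+1$ and bounding $\log n$ through \eqref{boundn2} (so that $\log n$ is of size $O(\log k)$), this self-referential estimate becomes an inequality in the single variable $k$ whose left-hand side grows linearly in $k$ while its right-hand side grows only like $(\log k)^2$; it can therefore hold only for $k<3.2\cdot 10^{31}$. Feeding this bound back into \eqref{boundn2} immediately gives $n<3\cdot 10^{31}k^8(\log k)^5<6.7\cdot 10^{292}$, completing the lemma.

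The main obstacle I anticipate lies in the two middle steps. First, making the substitution of $2^{n-2}$ for $f_k(\alpha)\alpha^{n-1}$ fully rigorous, i.e. verifying that the error coming from \eqref{sharp} is genuinely absorbed by the digit-error terms of size $\sim 10^{\ell+m}$ so that the clean upper bounds $11\cdot 10^{-\ell}$ and $11\cdot 10^{-m}$ survive unchanged. Second, the careful bookkeeping of the absolute constants $C_1,C_2,C_3$ through the two $D=1$ applications of Theorem \ref{thm:Matl} — in particular handling the height of $9/(d_1\cdot 10^{\ell}-(d_1-d_2))$, which reintroduces a factor $\ell$ and must be reconciled with the previously obtained bound on $\ell$ — before the final self-referential inequality can be closed into the stated bound on $k$.
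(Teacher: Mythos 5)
Your overall strategy is correct and genuinely different from the paper's. Both arguments hinge on the same opening move: $k>900$ makes \eqref{boundn2} give $n<2^{k/2}$, so \eqref{sharp} replaces $f_k(\alpha)\alpha^{n-1}$ by $2^{n-2}$ and Matveev runs over $\mathbb{K}=\mathbb{Q}$ with $D=1$. But from there the paper does \emph{not} imitate Lemmas \ref{lem:l} and \ref{lem:m}. It keeps the approximation error $2/2^{k/2}$ visible on the small side, producing a single form $\Gamma_3$ bounded by $27/2^{\min\{k/2,\,\ell\log_2 10\}}$, and then splits into cases: if the minimum is $k/2$, Matveev bounds $k$ at once; if it is $\ell\log_2 10$, Matveev bounds $\ell$, and a second form $\Gamma_4$ (essentially your second linear form, but with small side $8/2^{k/2}$ rather than $11\cdot 10^{-m}$) lets Matveev bound $k/2$ by $O((\log n)^2)$. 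Thus the paper bounds $k$ directly through the exponent $k/2$ and never estimates $m$ in this lemma. You instead absorb the $2/2^{k/2}$ error entirely (legitimate, since for $k>900$ it is far below $1/2$, so the bounds $11\cdot 10^{-\ell}$ and $11\cdot 10^{-m}$ do survive), bound $\ell\ll \log n$ and $m\ll(\log n)^2$ with absolute constants, and close via the digit-count inequality \eqref{eq2.4l} together with $k\le n-1$. This buys a cleaner argument with no case analysis, and potentially sharper bounds: the self-referential inequality $n<C_3(1+\log n)^2$ with $C_3\approx 10^{24}$ forces $n<10^{28}$ outright, far below $6.7\cdot 10^{292}$.

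Two repairs are needed. First, nonvanishing: you propose to repeat Lemmas \ref{lem:l} and \ref{lem:m} ``essentially verbatim,'' but their nonvanishing argument applies a Galois automorphism sending $\alpha$ to a conjugate $\alpha_j$, which is vacuous here because every quantity in your forms is rational. You must argue as the paper does for $\Gamma_3$ and $\Gamma_4$: vanishing would force $9\cdot 2^{n-2}=d_1\cdot 10^{2\ell+m}$, respectively $9\cdot 2^{n-2}=\left(d_1\cdot 10^{\ell}-(d_1-d_2)\right)\cdot 10^{\ell+m}$, both impossible since $5$ divides the right-hand side but not the left. Second, the closing numerics: your stated route converts $n<C_3(1+\log n)^2$ into an inequality in $k$ via $k<n$ and $\log n<117\log k$, which costs a factor of about $117^2\approx 1.4\cdot 10^4$; with honest constants this yields roughly $k<1.7\cdot 10^{28}(\log k)^2$, whose solution is near $10^{32}$, overshooting the stated $3.2\cdot 10^{31}$. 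The clean fix is to solve the self-referential inequality for $n$ \emph{first} (giving $n<10^{28}$, say) and only then use $k\le n-1$; both bounds of the lemma then follow with ample room, so the defect is one of bookkeeping order rather than of substance.
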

\begin{proof}
Since $k>900$, then \eqref{boundn2} holds and we can use \eqref{sharp}. So, we combine \eqref{sharp} with \eqref{eq2.3l} and write
\begin{align*}
	\left|\dfrac{1}{9}\left(d_1\cdot 10^{2\ell+m}-(d_1-d_2)\cdot 10^{\ell+m} +(d_1-d_2)\cdot 10^{\ell}-d_1 \right)-2^{n-2}\right|<\dfrac{2}{2^{k/2}}.
\end{align*}
Therefore
\begin{align*}
	\left|\dfrac{1}{9}\cdot d_1\cdot 10^{2\ell+m}-2^{n-2}\right|&<\dfrac{2}{2^{k/2}}+\dfrac{1}{9}\left(|d_1-d_2|\cdot 10^{\ell+m} +|d_1-d_2|\cdot 10^{\ell}+d_1 \right)\\
	&<\dfrac{2}{2^{k/2}}+1.2\cdot 10^{\ell+m},
\end{align*}
and dividing both sides by $2^{n-2}$ gives
\begin{align*}
	\left|\dfrac{1}{9}\cdot d_1\cdot 10^{2\ell+m}\cdot 2^{-(n-2)}-1\right| &<\dfrac{1}{2^{k/2}}+1.2\cdot \dfrac{10^{\ell+m}}{2^{n-2}}\\
	&<\dfrac{1}{2^{k/2}}+1.2\cdot \dfrac{21}{10^\ell},
\end{align*}
via the inequality in \eqref{eq2.4l}. We have also used the fact that $2/(2^{k/2}\cdot 2^{n-2})<1/2^{k/2}$, i.e. $2/2^{n-2}<1$, for $n\ge 9$. Therefore,
\begin{align}\label{gam5}
	\left|\dfrac{1}{9}\cdot d_1\cdot 10^{2\ell+m}\cdot 2^{-(n-2)}-1\right| 	&<\dfrac{27}{2^{\min\{k/2,\, \ell\log_2 10\}}}.
\end{align}

Let 
$$
\Gamma_3=\dfrac{1}{9}\cdot d_1\cdot 10^{2\ell+m}\cdot 2^{-(n-2)}-1.
$$
Notice that $\Gamma_3\ne 0$, otherwise we would have
\[
d_1\cdot 10^{2\ell + m} = 9\cdot 2^{n-2},
\]
which is not true because the left-hand side is divisible by 5, while the right-hand side is not. The algebraic number field containing the following $\gamma_i$'s is $\mathbb{K} := \mathbb{Q}$. We have $D = 1$, $t :=3$,
\begin{equation}\nonumber
	\begin{aligned}
		\gamma_{1}&:=d_1/9,\quad\gamma_{2}:=2,\qquad\qquad~\gamma_{3}:=10,\\
		b_{1}&:=1,\qquad ~~ b_{2}:=-(n-2),\quad b_{3}:=2\ell+m.
	\end{aligned}
\end{equation}

Since $h(\gamma_{1})\le h(9)+h(d_1)\le 2\log 9$,
$h(\gamma_{2})=h(2)=\log 2$ and $h(\gamma_{3})=h(10)= \log 10 $, we take $A_1:=2\log 9$, $A_2:=\log 2$ and $A_3:=\log 10$. Next, $B :=n\geq \max\{|b_i|:i=1,2,3\}$ and by Theorem \ref{thm:Matl}, we have
\begin{align}\label{gam6}
	\log |\Gamma_3| &> -1.4\cdot 30^{6} \cdot 3^{4.5}\cdot 1^2 (1+\log 1)(1+\log n)\cdot 2\log 9\cdot \log 2\cdot \log 10\nonumber\\
	&> -1.5\cdot 10^{12}\log n,
\end{align}
for $n\ge 9$. Comparing \eqref{gam5} and \eqref{gam6}, we get
\begin{align*}
	\min\{k/2,\, \ell\log_2 10\}\log 2-\log 27 &<1.5\cdot 10^{12}\log n.
\end{align*}
Two cases arise here.
\begin{enumerate}[(a)]
	\item If $\min\{k/2,\, \ell\log_2 10\}:=k/2$, then $(k/2)\log 2-\log 27 <1.5\cdot 10^{12}\log n$, so that
	\begin{align*}
		k &<4.4\cdot 10^{12}\log n.
	\end{align*}
	By \eqref{boundn2}, we have 
	$$n	<3\cdot 10^{31}k^8 (\log k)^5,$$
	 and so 
	 $$\log n	<73+13\log k<117\log k,$$
	  for $k> 900$. Therefore, we have 
	  $$k<5.3\cdot 10^{14}\log k,$$
	   which gives $k<8.2\cdot 10^{15}$.
	
	\item If $\min\{k/2,\, \ell\log_2 10\}:=\ell\log_2 10$, then $(\ell\log_2 10)\log 2-\log 27 <1.5\cdot 10^{12}\log n$, so that
	\begin{align*}
		\ell &<10^{12}\log n.
	\end{align*}
		We proceed as in \eqref{gam5} via
	\begin{align*}
		\left|\dfrac{1}{9}\left(d_1\cdot 10^{2\ell+m}-(d_1-d_2)\cdot 10^{\ell+m} +(d_1-d_2)\cdot 10^{\ell}-d_1 \right)-2^{n-2}\right|<\dfrac{2}{2^{k/2}}.
	\end{align*}
This means that
	\begin{align*}
		\left|\dfrac{1}{9}\left(d_1\cdot 10^{2\ell+m}-(d_1-d_2)\cdot 10^{\ell+m}\right)-2^{n-2}\right|
		&<\dfrac{2}{2^{k/2}}+\dfrac{1}{9}\left(|d_1-d_2|\cdot 10^{\ell}+d_1 \right)\\
		&<\dfrac{2}{2^{k/2}}+1.2\cdot 10^{\ell}.
	\end{align*}
Dividing through both sides of the above inequality by $2^{n-2}$, we get
	\begin{align*}
		\left|\dfrac{1}{9}\left(d_1\cdot 10^{2\ell+m}-(d_1-d_2)\cdot 10^{\ell+m}\right)\cdot 2^{-(n-2)}-1\right|
		&<\dfrac{1}{2^{k/2}}+1.2\cdot \dfrac{10^\ell}{2^{n-2}}.
	\end{align*}
Therefore, since $n\ge k+2$ (hence $n-2>k/2$) and $k/2> \ell\log_2 10$, we have 
\begin{align}\label{gam7}
	\left|\dfrac{1}{9}\left(d_1\cdot 10^{2\ell+m}-(d_1-d_2)\cdot 10^{\ell+m}\right)\cdot 2^{-(n-2)}-1\right|
	&<\dfrac{8}{2^{k/2}}.
\end{align}
	Let 
	$$
	\Gamma_4=\dfrac{1}{9}\left(d_1\cdot 10^{\ell}-(d_1-d_2)\right)\cdot 10^{\ell+m}\cdot 2^{-(n-2)}-1.
	$$
	Again, $\Gamma_4\ne 0$, otherwise we would have
	\[
	d_1\cdot 10^{2\ell+m}-(d_1-d_2)\cdot 10^{\ell+m} = 9 \cdot 2^{n-2},
	\]
which is false because the left-hand side is divisible by 5, while the right-hand side is not. The algebraic number field containing the following $\gamma_i$'s is $\mathbb{K} := \mathbb{Q}$. We have $D = 1$, $t :=3$,
	\begin{equation}\nonumber
		\begin{aligned}
			\gamma_{1}&:=(d_1\cdot 10^{\ell}-(d_1-d_2))/9,\quad\gamma_{2}:=2,\qquad\qquad~\gamma_{3}:=10,\\
			b_{1}&:=1,\qquad \qquad\qquad\qquad\qquad~~ b_{2}:=-(n-2),\quad b_{3}:=\ell+m.
		\end{aligned}
	\end{equation}
	Since 
	\begin{align*}
		h(\gamma_{1})&=h((d_1\cdot 10^{\ell}-(d_1-d_2))/9)\\
		&< \log 9+\ell\log 10+3\log 9+4\log 2 \\
		&< \log 9+10^{12}\log n\log 10+3\log 9+4\log 2 \\
		&<2.4\cdot 10^{12}\log n,
	\end{align*}
we can take $A_1:=2.4\cdot 10^{12}\log n$. As before, $A_2:=\log 2$, $A_3:=\log 10$, and $B:=n$. Now, by Theorem \ref{thm:Matl},
	\begin{align}\label{gam8}
		\log |\Gamma_4| &> -1.4\cdot 30^{6} \cdot 3^{4.5}\cdot 1^2 (1+\log )(1+\log n)\cdot 2.4\cdot 10^{12}\log n\cdot \log 2\cdot \log 10\nonumber\\
		&> -8\cdot 10^{23}(\log n)^2.
	\end{align}
	for $k\ge 2$ and $n\ge 9$. Comparing \eqref{gam7} and \eqref{gam8}, we get
	\begin{align*}
		(k/2)\log 2-\log 8 &<8\cdot 10^{23}(\log n)^2,
	\end{align*}
	which leads to $k<2.4\cdot 10^{24}(\log n)^2$.
By \eqref{boundn2}, 
$$n	<3\cdot 10^{31}k^8 (\log k)^5,$$ 
and so $\log n<117\log k$, for $k> 900$. Therefore, we have 
$$k<3.3\cdot 10^{28}(\log k)^2,$$ which gives 
$k<3.2\cdot 10^{31},$ 
and thus $n<6.7\cdot 10^{292}$.
\end{enumerate}
This completes the proof.
\end{proof}

We reduce the bounds in Lemma \ref{lem:nm}. To do this, we revisit \eqref{gam5} and recall that
\[
\Gamma_3 := \dfrac{1}{9}\cdot d_1\cdot 10^{2\ell+m}\cdot 2^{-(n-2)}-1.
\]
Since we showed that $\Gamma_3 \neq 0$, it follows that $\Lambda_3 \neq 0$. Since $\min\{k/2,\, \ell\log_2 10\}\ge 6$, we obtain the inequality  
\[
\left| e^{\Lambda_3} - 1 \right| = |\Gamma_3| < 0.5,
\]
which leads to $e^{|\Lambda_3|} \leq 1 + |\Gamma_3| < 1.5$. Therefore 
\[
\left|  \log \left(\dfrac{d_1}{9}\right) + (2\ell + m) \log 10 -(n-2) \log 2  \right| < \dfrac{40.5}{2^{\min\{k/2,\, \ell\log_2 10\}}}.
\]
For each $d_1\in\{1,\ldots , 9\}$, we apply the LLL-algorithm to obtain a lower bound for the smallest nonzero value of the above linear form, bounded by integer coefficients with absolute value less than $n < 6.7 \cdot 10^{292}$. Here, we consider the lattice  
\[
\mathcal{A}' = \begin{pmatrix} 
	1 & 0 & 0 \\ 
	0 & 1 & 0 \\ 
	\lfloor C\log (d_1/9)\rfloor & \lfloor C\log 10\rfloor & \lfloor C\log (1/2) \rfloor
\end{pmatrix},
\]
where we set $C := 10^{879}$ and $y := (0,0,0)$. Applying Lemma \ref{lem2.5m}, we obtain  
\[
l(\mathcal{L},y) = |\Lambda| > c_1 = 10^{-297} \quad \text{and} \quad \delta = 1.61\cdot 10^{295}.
\]
Using Lemma \ref{lem2.6m}, we conclude that $S = 1.4 \cdot 10^{586}$ and $T = 1.1 \cdot 10^{293}$. Given that $\delta^2 \geq T^2 + S$, and choosing $c_3 := 40.5$ and $c_4 := \log 2$, we get  $\min\{k/2,\, \ell\log_2 10\} \leq 1944$. 

We proceed the analysis in two ways.
\begin{enumerate}[(a)]
	\item If $\min\{k/2,\, \ell\log_2 10\}:=k/2$, then $k/2 \leq 1944$, or $k\le 3888$.

	\item If $\min\{k/2,\, \ell\log_2 10\}:=\ell\log_2 10$, then $\ell\log_2 10\le 1944$. Therefore $\ell \le 585$.
	We revisit equation \eqref{gam7} and recall the expression
	\[
\Gamma_4:=\dfrac{1}{9}\left(d_1\cdot 10^{\ell}-(d_1-d_2)\right)\cdot 10^{\ell+m}\cdot 2^{-(n-2)}-1.
	\]
	Since we have already shown that \(\Gamma_4 \neq 0\), it follows that \(\Lambda_4 \neq 0\). Since \(k >900\), we obtain the inequality
	\[
	\left| e^{\Lambda_4} - 1 \right| = |\Gamma_4| < 0.5,
	\]
	which implies that \(e^{|\Lambda_4|} \leq 1 + |\Gamma_4| < 1.5\). Consequently, we arrive at
	\[
	\left|  \log \left(\frac{d_1\cdot 10^{\ell}-(d_1-d_2)}{9}\right) + (\ell + m) \log 10 -(n-2) \log 2  \right| < \frac{12}{2^{k/2}}.
	\]
We use the LLL-algorithm, restricting integer coefficients to absolute values below \(n < 6.7 \times 10^{292}\). For this purpose, we consider the lattice
	\[
	\mathcal{A}'' = \begin{pmatrix} 
		1 & 0 & 0 \\ 
		0 & 1 & 0 \\ 
		\lfloor C\log ((d_1\cdot 10^{\ell}-(d_1-d_2))/9)\rfloor & \lfloor C\log 10\rfloor & \lfloor C\log (1/2) \rfloor
	\end{pmatrix},
	\]
	where we define \(C := 10^{879}\) and set \(y := (0,0,0)\) as before. We use the same values for $\delta$, $S$ and $T$ as before, and by selecting \(c_3 := 12\) and \(c_4 := \log 2\), it follows that $k/2 \leq 1942$ from which we get $k\le 3884$.
	
\end{enumerate}
Therefore, we always have $k\le 3888$. 

This bound on $k$ gives $n<10^{65}$ via inequality \eqref{boundn2}, so we repeat the reduction again with this new upper bound on $n$. We reconsider equation \eqref{gam5} again as before and apply the LLL-algorithm to determine a lower bound for the smallest nonzero value of this linear form, where the integer coefficients are bounded by $n <  10^{65}$. We use the same lattice  $\mathcal{A}'$ as before but now $C := 10^{195}$ and $y := (0,0,0)$. By applying Lemma \ref{lem2.5m}, we establish  
\[
l(\mathcal{L},y) = |\Lambda| > c_1 = 10^{-68} \quad \text{and} \quad \delta = 7.3\cdot 10^{66}.
\]
Using Lemma \ref{lem2.6m}, we conclude that $S = 3 \cdot 10^{130}$ and $T = 1.51 \cdot 10^{65}$. Since $\delta^2 \geq T^2 + S$, and choosing $c_3 := 40.5$ and $c_4 := \log 2$, it follows that $\min\{k/2,\, \ell\log_2 10\} \leq 434$.  

We again analyze two cases:
\begin{enumerate}[(a)]
	\item If $\min\{k/2,\, \ell\log_2 10\} := k/2$, then $k/2 \leq 434$, which implies $k \leq 868$. This contradicts the working assumption that $k>900$.
	
	\item If $\min\{k/2,\, \ell\log_2 10\} := \ell\log_2 10$, then $\ell\log_2 10 \leq 434$, giving $\ell \leq 130$.
	
	Revisiting equation \eqref{gam7}, we again apply the LLL-algorithm, with integer coefficients restricted to absolute values below $n <  10^{65}$. By the same lattice $\mathcal{A}''$ as before, we have $C := 10^{195}$ and $y := (0,0,0)$. Using Lemma \ref{lem2.5m} and Lemma \ref{lem2.6m}, we have $\delta=7.3\cdot 10^{66}$, $S = 3 \cdot 10^{130}$ and $T = 1.51 \cdot 10^{65}$. Selecting $c_3 := 12$ and $c_4 := \log 2$, we deduce that $k/2 \leq 428$, leading to $k \leq 856$. This also contradicts $k>900$.
	
\end{enumerate}
This completes the proof to Theorem \ref{thm1.1l}. \qed

\section*{Acknowledgments} 
The first author thanks the Mathematics division of Stellenbosch University for funding his PhD studies.

\section*{Addresses}

$ ^{1} $ Mathematics Division, Stellenbosch University, Stellenbosch, South Africa.

Email: \url{hbatte91@gmail.com}

Email: \url{fluca@sun.ac.za}

\appendix
\section{Appendices}
\subsection{Py Code I}\label{app0}
\begin{verbatim}
def is_power_of_two(n):
    """Returns True if n is a power of 2"""
    return n > 0 and (n & (n - 1)) == 0
def find_power_of_two_palindromes(max_l=3, max_m=12):
    valid_palindromes = []
    for d1 in range(1, 10):  # d1 > 0 (no leading zero)
        for d2 in range(10):
            if d1 == d2:
               continue
            for l in range(1, max_l + 1):
               for m in range(1, max_m + 1):
                   left = str(d1) * l
                   middle = str(d2) * m
                   right = str(d1) * l
                   pal_str = left + middle + right
                   pal_num = int(pal_str)
                   if is_power_of_two(pal_num):
                      valid_palindromes.append((pal_num, d1, d2, l, m))
    return valid_palindromes
# Run it
results = find_power_of_two_palindromes()
# Print result
print(f"Found {len(results)} palindromes that are powers of 2:")
for val, d1, d2, l, m in results:
    print(f"{val} = 2^{val.bit_length() - 1}, from d1={d1}, d2={d2}, l={l}, m={m}")
\end{verbatim}

\subsection{Py Code II}\label{app1}
\begin{verbatim}
def k_fibonacci_sequence(k, n_max):
    """Generates the k-Fibonacci sequence up to n_max terms."""
    F = [0] * (k - 1) + [1]
    for _ in range(n_max - len(F)):
       F.append(sum(F[-k:]))
    return F
def is_palindromic_concatenation(num):
    s = str(num)
    n = len(s)
    for l in range(1, n // 2 + 1):
        m = n - 2 * l
        if m <= 0:
           continue
        part1 = s[:l]
        part2 = s[l:l + m]
        part3 = s[l + m:]
        if (
           part1 == part3
           and len(set(part1)) == 1  # all digits same
           and len(set(part2)) == 1
           and part1 != part2
        ):
           return True
    return False
def find_all_palindromic_k_fibs(k_min=2, k_max=900, n_max=2138):
    results = []
    for k in range(k_min, k_max + 1):
        fibs = k_fibonacci_sequence(k, n_max + 1)
        for i, val in enumerate(fibs:)
            math_n i+2-k
            if val > 0 and is_palindromic_concatenation(val):
                 results.append((k, n, val))
        return results
# Run the full check
results = find_all_palindromic_k_fibs()
# Print results
print("Found palindromic repdigit k-Fibonacci numbers in 2 <= k<=900 ,n <= 2138:")
for k, n, value in results:
    print(f"F_{n}^{({k})} = {value}")
\end{verbatim}

\end{document}